\newtheorem{theorem}{Theorem}
\newtheorem{corollary}[theorem]{Corollary}
\theoremstyle{definition}
\newtheorem{definition}[theorem]{Definition}
\newtheorem{remark}[theorem]{Remark}
\newtheorem*{remark*}{Remark}
\begin{document}
\par\noindent {\Large\bf
A Schwarz Method for the Magnetotelluric Approximation
    of Maxwell's equations
\par}

{\vspace{4mm}\par\noindent {Fabrizio Donzelli$^1$, Martin J. Gander$^2$, and Ronald D. Haynes$^1$} 
\par\vspace{2mm}\par}
{\vspace{2mm}\par\noindent {\it $^{1}$~Department of Mathematics and Statistics, Memorial University of Newfoundland, St. John's (NL) A1C 5S7, Canada}}
{\par\noindent {\it $^{2}$~Section de Math\'ematiques 2-4 rue du Li\`evre, CP 64 CH-1211 Gen\`eve}}
{\vspace{2mm}\par\noindent {\it\textup{E-mail:} fdonzelli@mun.ca, Martin.Gander@unige.ch, rhaynes@mun.ca
}\par}

\abstract{The magnetotelluric approximation of the Maxwell's
  equations is used to model the propagation of low
  frequency electro-magnetic waves in the Earth's subsurface,
    with the purpose of reconstructing the presence of mineral or oil
  deposits. We propose a classical Schwarz method for solving
    this magnetotelluric approximation of the Maxwell equations,
  and prove its convergence using maximum principle
    techniques. This is not trivial, since solutions are complex
    valued, and we need a new result that the magnetotelluric 
    approximations satisfy a maximum modulus principle for our
    proof. We illustrate our analysis with numerical experiments. }

\section{Introduction}\label{sec:1}

Maxwell's equations can be used to model the propagation
of electro-magnetic waves in the subsurface of the
 Earth. The interaction of such waves with the material in the
subsurface produces response waves, which carry information about the
physical properties of the Earth's subsurface, and their measurement
  allows geophysicists to detect the presence of mineral or oil
deposits.  Since such deposits are often found to be invariant with
respect to one direction parallel to the Earth's surface, the model
can be reduced to a two dimensional complex partial differential
equation. Following \cite{Weiss2012}, the magnetotelluric 
  approximation is derived from the full 3D Maxwell's
equations,
\begin{equation}\label{Maxwell3d}
  \frac{\partial B}{\partial t}+\nabla \times E = 0, 
  \quad -\frac{\partial D}{\partial t}+\nabla \times H = J,
\end{equation}
in the quasi-static (i.e.\ long wavelength, low frequency) 
regime, which implies that $\frac{\partial D}{\partial t}$ in
  \eqref{Maxwell3d} is neglected.  Assuming a time dependence of
the form $e^{i\omega t}$, where $\omega$ is the pulsation of
  the wave, using Ohm's law, $J=\sigma E+J^e$, where $J^e$
  denotes some exterior current source, and the constitutive relation
$B=\mu H$ where $\mu$ is assumed to be the permeability of free space,
we obtain
\begin{equation}\label{eq:EHeqn}
  \nabla \times E = -i\omega\mu H,\quad
  \nabla \times H = \sigma E+J^e.
\end{equation}
Assuming the plane-wave source of magnetotellurics, and a
two-dimensional Earth structure such that $\sigma=\sigma(x,z)$, the
electric and magnetic fields can be decomposed into two
independent modes.  For the TM-, or H-polarization, mode, we have
$$ 
E = (E_x,0,E_z) \quad\text{and}\quad H = (0,H_y,0).
$$
Hence the first vector valued equation in (\ref{eq:EHeqn})
becomes a scalar equation,
\begin{equation}\label{eq:Enew}
  \frac{\partial E_x}{\partial z}-\frac{\partial E_z}{\partial x}
  =-i\omega\mu H_y,
\end{equation}
and the second vector valued equation in (\ref{eq:EHeqn})
gives two scalar equations,
\begin{equation}\label{eq:Hnew1}
-\frac{\partial H_y}{\partial z} = \sigma E_x+J^e_x \quad\text{or}
\quad E_x = -\frac{1}{\sigma}\frac{\partial H_y}{\partial z}
-\frac{J_x^e}{\sigma},
\end{equation}
and
\begin{equation}\label{eq:Hnew2}
\frac{\partial H_y}{\partial x} = \sigma E_z+J_z^e \quad\text{or}
\quad E_z = \frac{1}{\sigma}\frac{\partial H_y}{\partial x}
-\frac{J_z^e}{\sigma}.
\end{equation}
Substituting (\ref{eq:Hnew1}) and (\ref{eq:Hnew2}) into
(\ref{eq:Enew}) thus leads to a scalar equation for $H_y$,
\begin{equation}\label{eq:ourmodel}
-\frac{\partial}{\partial z}\left\{\frac{1}{\sigma}\frac{\partial H_y}{\partial z}\right\}
-\frac{\partial}{\partial x}\left\{\frac{1}{\sigma}\frac{\partial H_y}{\partial x}\right\}
+i\omega\mu H_y =\frac{\partial}{\partial z}\left\{
\frac{J_x^e}{\sigma}\right\}
  -\frac{\partial}{\partial x}\left\{\frac{J_z^e}{\sigma}\right\}.
\end{equation}
In geophysical applications the coefficient of conductivity $\sigma$
is in general a non-constant, piece-wise continuous function.  We
  will assume however for simplicity of our first study here that
  $\sigma \equiv 1$. If we then set $u := H_y$, assume homogeneous
    Dirichlet boundary conditions and denote by
    $f:=-\frac{\partial}{\partial z}\left\{
    \frac{J_x^e}{\sigma}\right\} +\frac{\partial}{\partial
      x}\left\{\frac{J_z^e}{\sigma}\right\}$, we obtain as the
  magnetotelluric approximation of the Maxwell equations (cf.\ 
  equation (2.86) in \cite{Weiss2012})
\begin{equation}\label{eq:1}
  \Delta u -i\omega u= f\quad \mbox{in $\Omega$}\;, u=0
  \quad \mbox{on $\partial\Omega$\;.}
\end{equation}
We further assume for simplicity that $\Omega$ is a domain with
smooth boundary and that $f\in C^{\infty}(\Omega) \cap
C(\overline\Omega)$. The pulsation $\omega$ is assumed to be real
and non-zero. Note that the solution $u$ of equation \eqref{eq:1}
  could also represent a component of the electric field if the model
  had been derived in an analogous fashion from the TE mode.

We are interested in solving the magnetotelluric approximation
  \eqref{eq:1} using Schwarz methods.  The alternating Schwarz
method, introduced by H.A. Schwarz in 1869 \cite{Schwarz1869} to
  prove existence and uniqueness of solutions to Laplace's equation on
  irregular domains, is the foundational idea of the field of domain
decomposition, and has inspired work in both theoretical aspects and
applications to all fields of science and engineering, see
  \cite{Gander2008,GanderWanner2014} and references therein for more
  information about the historical context. Lions
\cite{Lions1988,Lions1989} reconsidered the problem of the convergence
of the method for the Poisson equation on more general configurations
of overlapping subdomains. In his second paper \cite{Lions1989}, he
followed the idea of Schwarz and proved convergence of the
alternating Schwarz method using the maximum principle for
harmonic functions. He also introduced a parallel variant of
  the Schwarz method, where all subdomain problems are solved
  simultaneously. Schwarz methods have also been introduced and
  studied for the original Maxwell equations \eqref{Maxwell3d}, see
  \cite{DolGanGer-Gio2009,BouDolGanLan2012,DoleanGanderLee2015,
    BouajajiDoleanGander2015, DoleanGanderVeneros2016,
    DoleanGanderVeneros2018} and references therein; maximum
  principle arguments can not be used here for studying
  convergence.  We show here that for the magnetotelluric  
  approximation of Maxwell equations in \eqref{eq:1}, which also has
  complex solutions like the original Maxwell equations, the
  convergence of the parallel Schwarz method can be proved using a
  maximum modulus principle satisfied by complex solutions of
  \eqref{eq:1}.

\section{Well-Posedness, Schwarz Method and Convergence}\label{sec:2}

We start by establishing the well-posedness of the
  magnetotelluric approximation of the Maxwell equations in
  \eqref{eq:1}.
\begin{theorem}\label{th:1}
  Let $\Omega\subset\mathbb R^2$ be a bounded domain with smooth
  boundary. Assume that $f\in L^2(\Omega)$ and $\omega$ is a
  non-zero constant. Then the 
  boundary value problem \eqref{eq:1} has a unique solution $u \in
  H^1_0(\Omega)$, depending continuously on $f$.
\end{theorem}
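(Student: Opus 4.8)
The plan is to recast the boundary value problem \eqref{eq:1} in weak form and apply the complex version of the Lax--Milgram theorem. First I would multiply the equation by the conjugate $\overline v$ of a test function $v\in H^1_0(\Omega)$, integrate over $\Omega$, and use Green's identity together with the homogeneous boundary condition to transfer the Laplacian onto $v$. This produces the sesquilinear form
\[
a(u,v) := \int_\Omega \nabla u\cdot\overline{\nabla v}\,dx + i\omega\int_\Omega u\,\overline v\,dx
\]
and the antilinear functional $L(v) := -\int_\Omega f\,\overline v\,dx$, so that the weak problem reads: find $u\in H^1_0(\Omega)$ with $a(u,v)=L(v)$ for all $v\in H^1_0(\Omega)$. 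A solution of this weak problem that is smooth enough will be a solution of \eqref{eq:1}, and conversely.

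Next I would verify the hypotheses of Lax--Milgram. Boundedness of $a$ and $L$ follows from the Cauchy--Schwarz inequality, yielding $|a(u,v)|\le C\,\|u\|_{H^1}\|v\|_{H^1}$ and $|L(v)|\le \|f\|_{L^2}\|v\|_{H^1}$. The decisive step is coercivity, and here the complex structure works in our favour: evaluating at $v=u$ gives
\[
a(u,u) = \int_\Omega |\nabla u|^2\,dx + i\omega\int_\Omega |u|^2\,dx,
\]
whose real part is exactly $\|\nabla u\|_{L^2}^2$, since the zeroth-order term is purely imaginary. The term $-i\omega u$ therefore contributes nothing to $\mathrm{Re}\,a(u,u)$ and cannot spoil coercivity. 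Invoking the Poincar\'e inequality, valid because $\Omega$ is bounded and $u\in H^1_0(\Omega)$, there is $C_P>0$ with $\|u\|_{L^2}^2\le C_P\|\nabla u\|_{L^2}^2$, whence $\mathrm{Re}\,a(u,u)=\|\nabla u\|_{L^2}^2\ge (1+C_P)^{-1}\|u\|_{H^1}^2$, which is the required estimate.

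The complex Lax--Milgram theorem then delivers a unique $u\in H^1_0(\Omega)$ solving the weak problem, together with the bound $\|u\|_{H^1}\le (1+C_P)\|f\|_{L^2}$, which is precisely the asserted continuous dependence on $f$. The only genuine subtlety, and thus the main point to get right, is that one must argue coercivity through the \emph{real part} of the sesquilinear form rather than through a real-valued coercivity constant; this is the standard adaptation of Lax--Milgram to complex Hilbert spaces. I note in passing that the argument never uses $\omega\neq 0$, since coercivity originates entirely in the gradient term; the hypothesis $\omega\neq 0$ is not required for well-posedness here but becomes essential for the maximum modulus principle exploited later in the convergence analysis.
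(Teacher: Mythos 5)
Your proof is correct and follows exactly the route the paper intends: the paper's proof consists of the single sentence that the result ``follows from a standard application of the Riesz Representation Theorem and the Lax--Milgram Lemma,'' and your argument is precisely that standard application, written out in full with the correct sesquilinear form, the observation that the purely imaginary zeroth-order term (using that $\omega$ is real) drops out of $\mathrm{Re}\,a(u,u)$, and coercivity via Poincar\'e. Your closing remark that $\omega\neq 0$ is not needed for well-posedness but only for the later maximum modulus principle is also accurate.
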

\begin{proof}
  This result follows from a standard application of the Riesz
  Representation Theorem and the Lax-Milgram Lemma.
\end{proof}

We now decompose the domain $\Omega\subset\mathbb R^2$ first into
  non-overlapping subdomains, and then enlarge each subdomain by a
  layer of positive width to obtain the overlapping subdomains
$\Omega_j$, for $j = 1, ... J$, leading to a strongly overlapping
  subdomain decomposition of $\Omega$.  An example is shown in Figure
\ref{fig:1}, where the non-overlapping decomposition is indicated
  by the dashed lines, see also \cite{GanderZhao2000}.
\begin{figure}[t]
%smooth domain
\centering
\begin{tikzpicture}
%old interfaces are dashed
\draw (1,0) -- (5,0);
\draw (5,0) arc (270:360:1cm);
\draw (1,0) arc (270:180:1cm);
\draw[dashed] (3,0) -- (3,6);
\draw[dashed] (0,3) -- (6,3);
\draw (0,1) -- (0,5);
\draw (0,5) arc (180:90:1cm);
\draw (6,1) -- (6,5);
\draw (6,5) arc (0:90:1cm);
\draw (1,6) -- (5,6);
%
%new subdomain 1
\draw (0,2.7) -- (3,2.7); %horizontal segment
\draw (3,2.7) arc (270:360:0.3cm); %quarter circle
\draw (3.3,3) -- (3.3,6); %vertical segment
\draw (0.5,5.6) node[anchor=west, scale=0.5]{\huge $\Omega_1$}; %name of subdomain 1
\draw (0,2.4) node[anchor=west, scale=0.5]{\huge $\Gamma_1$}; %name of interface 1
%
%new subdomain 2
\draw (2.7,6) -- (2.7,2.9); %vertical segment
\draw (2.7,2.9) arc (180:270:0.3cm); %quarter circle
\draw (3,2.6) -- (6,2.6); %horizontal segment
\draw (5.5,5.6) node[anchor=east, scale=0.5]{\huge $\Omega_2$}; %name of subdomain 2
\draw (6,2.3) node[anchor=east, scale=0.5]{\huge $\Gamma_2$}; %name of interface 2
%
%new subdomain 3
\draw (2.5,0) -- (2.5,3.2); %vertical segment
\draw (2.5,3.2) arc (-180:-270:0.3cm); %quarter circle
\draw (2.8,3.5) -- (6,3.5); %horizontal segment
\draw (5.5,0.4) node[anchor=east, scale=0.5]{\huge $\Omega_3$}; %name of subdomain 3
\draw (6,3.8) node[anchor=east, scale=0.5]{\huge $\Gamma_3$}; %name of interface 3
%
%new subdomain 4
\draw (0,3.2) -- (3.6,3.2); %horizontal segment
\draw (3.6,3.2) arc (-270:-360:0.3cm); %quarter circle
\draw (3.9,2.9) -- (3.9,0); %vertical segment
\draw (0.5,0.4) node[anchor=west, scale=0.5]{\huge $\Omega_4$}; %name of subdomain 4
\draw (0,3.5) node[anchor=west, scale=0.5]{\huge $\Gamma_4$}; %name of interface 4
\end{tikzpicture}
\caption{Strongly overlapping subdomain decomposition obtained by
    enlarging a non-overlapping decomposition, indicated by the dashed
    lines, by a layer of strictly positive width}
\label{fig:1}
\end{figure}
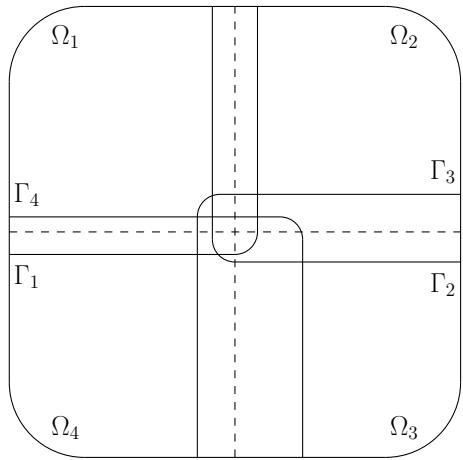
For such strongly overlapping decompositions, one can define a
  smooth partition of unity $\{\chi_j\}_{j=1}^J$ subordinated to the open
  covering $\{\Omega_j\}_{j=1}^J$, such that the support of $\chi_j$
  is a set $K_j$ contained in the open subdomain $\Omega_j$ for each
  $j=1,2,\ldots,J$, see \cite[Theorem 15, Chapter 2]{Spivak1999}.
The assumption of a strongly overlapping decomposition is not strictly
necessary to use maximum principle arguments, see for example
  \cite{CiaramellaGander2018,CiaramellaGander2018DD25}, which contain
  even accurate convergence estimates, but we make it here since
it simplifies the new and first application of the maximum
modulus principle (via Corollary \ref{cor:1}) for studying Schwarz
  methods for equations with complex valued solutions. For each
$\Omega_j$, we denote by $\Gamma_j$ the portion of
$\partial\Omega_j$ in the interior of $\Omega$.

The parallel Schwarz method for such a multi-subdomain
decomposition starts with a global initial guess for the solution
  of \eqref{eq:1}, $u_{\text{glob}}^0\in C^2(\Omega)\cap
  C^0(\overline \Omega)$ (less regularity would also be
  possible, because of the regularization provided by the
  equation). If at the step $n$ of the parallel Schwarz method
the global approximation $u_{\text{glob}}^n$ has been
constructed, and $u_{\text{glob}}^n\in C^2(\Omega)\cap C^0(\overline
\Omega)$, then it constructs the next global
approximation at step $n+1$ by solving, for $j=1,\cdots ,J$, the
Dirichlet problems
\begin{equation}\label{eq:2}
 \begin{array}{rcll}
  \Delta u^{n+1}_j -i\omega u^{n+1}_j&=& f\quad&\mbox{in $\Omega_j$},\\
  u^{n+1}_j&=&0 &\mbox{on $\overline\Omega_{j}\cap \partial\Omega$},\\
  u^{n+1}_j &=& u^{n}_{\text{glob}}\qquad& \mbox{on $\Gamma_{j}$},
 \end{array}
\end{equation}
and then defining the $(n+1)^{th}$ global iterate by using the partition of unity,
\begin{equation}\label{eq:3}
  u_{\text{glob}}^{n+1}=\sum_{j=1}^J\chi_ju^{n+1}_j\;. 
\end{equation}
Since the initial guess $u_{\text{glob}}^0$ is smooth, by
induction it follows that $u_{\text{glob}}^{n+1}\in C^2(\Omega)\cap
C^0(\overline \Omega)$. This fact allows us to use the classical
(i.e. non-variational ) formulation of the maximum modulus principle.
\begin{definition}
  A real valued function $v$ of class $C^2 (\Omega)$ is said to be
  subharmonic if $\Delta v (x) \geq 0 $, $\forall x \in\Omega$, and
  strictly subharmonic if $\Delta v (x)> 0$, $\forall x \in\Omega$.
\end{definition}
Note that the above definition is not the most general one, but
it is suitable for the purposes of our paper.  The property that we
will use to prove the convergence of the parallel Schwarz method
is the well-known maximum principle, which is the content of the next
theorem (see \cite{GunningRossi1931}, Theorem J-7).
\begin{theorem}\label{th:2}
  Let $v\in C^2(\Omega)\cap C (\overline{\Omega})$ be a non-constant
  subharmonic function.  Let $O\subset \Omega$  be a proper open subset. Then $v$ satisfies the strong maximum
  principle, namely $\max_O v <
  \max_{\partial\Omega} v$.
\end{theorem}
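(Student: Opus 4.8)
The plan is to derive the strong maximum principle from the sub-mean-value inequality for $C^2$ subharmonic functions together with a connectedness argument. I prefer this route over the second-derivative test: the latter shows directly only that a \emph{strictly} subharmonic function cannot have an interior maximum, and upgrading it to the strict inequality claimed here for a merely subharmonic $v$ would require an additional perturbation (e.g.\ replacing $v$ by $v+\varepsilon|x|^2$) followed by a Hopf-type boundary argument, which is more cumbersome than the mean-value approach.

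First I would establish the sub-mean-value inequality: for every ball with $\overline{B_r(x_0)}\subset\Omega$,
\[
  v(x_0)\le \frac{1}{|\partial B_r(x_0)|}\int_{\partial B_r(x_0)} v\,dS .
\]
To prove it, set $\phi(r):=\frac{1}{|\partial B_r(x_0)|}\int_{\partial B_r(x_0)} v\,dS$ and rescale to the unit sphere so that the $r$-dependence sits inside the integrand. Differentiating under the integral sign and converting back with the divergence theorem gives $\phi'(r)=\frac{1}{|\partial B_r(x_0)|}\int_{B_r(x_0)}\Delta v\,dx\ge 0$, since $v$ is subharmonic. As $\phi$ is continuous with $\phi(r)\to v(x_0)$ when $r\to 0^+$, its monotonicity yields $\phi(r)\ge v(x_0)$, as claimed.

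Next I would run the open--closed argument. Let $M:=\max_{\overline\Omega}v$, attained because $v$ is continuous on the compact set $\overline\Omega$, and put $S:=\{x\in\Omega:v(x)=M\}$. This set is relatively closed in $\Omega$ by continuity, and it is also open: if $x_0\in S$ and $\overline{B_r(x_0)}\subset\Omega$, then the sub-mean-value inequality forces $M=v(x_0)\le\frac{1}{|\partial B_\rho(x_0)|}\int_{\partial B_\rho(x_0)}v\,dS\le M$ for every $\rho\le r$, so $v\equiv M$ on each sphere $\partial B_\rho(x_0)$ and hence on all of $B_r(x_0)\subset S$. Since $\Omega$ is a domain, thus connected, a nonempty set that is both open and closed must equal $\Omega$, which would make $v$ constant and contradict the hypothesis. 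Therefore $S=\emptyset$: the value $M$ is never attained in the interior, so $M=\max_{\partial\Omega}v$ and $v(x)<M$ for every $x\in\Omega$. To conclude, since $O$ is a proper open subset with $\overline O\subset\Omega$, the function $v$ attains its maximum over the compact set $\overline O$ at some interior point $x_1\in\Omega$, and the previous sentence gives $\max_O v\le v(x_1)<M=\max_{\partial\Omega}v$.

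The step I expect to be the main obstacle is the sub-mean-value inequality, specifically the clean justification of differentiating the spherical average $\phi(r)$: one must rescale to a fixed sphere to differentiate under the integral, apply the divergence theorem, and verify the $r\to 0^+$ limit. Once this inequality is secured, the connectedness dichotomy and the final deduction are routine; I would only stress that connectedness of $\Omega$ is indispensable, since without it the open--closed argument, and with it the strict conclusion, breaks down.
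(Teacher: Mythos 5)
Your proof is correct, but note that it cannot really be compared to ``the paper's own proof'': the paper does not prove this theorem at all, it simply quotes it as a classical result (Theorem J-7 of \cite{GunningRossi1931}). Your argument is the standard self-contained proof of the strong maximum principle for subharmonic functions, and it is sound in every step: the derivative formula $\phi'(r)=\frac{1}{|\partial B_r(x_0)|}\int_{B_r(x_0)}\Delta v\,dx\ge 0$ for the spherical averages is correct (rescale to the unit sphere, differentiate under the integral, convert the resulting flux integral with the divergence theorem), the limit $\phi(r)\to v(x_0)$ as $r\to 0^+$ then yields the sub-mean-value inequality, and the open--closed dichotomy for $S=\{x\in\Omega: v(x)=M\}$ combined with connectedness of the domain $\Omega$ correctly excludes an interior maximum for non-constant $v$. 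Compared to the citation, your route buys a dimension-independent, elementary proof that uses only $C^2$ regularity and the divergence theorem; you are also right that the second-derivative test for strictly subharmonic functions plus a perturbation $v+\varepsilon|x|^2$ would be clumsier here, since it gives the weak principle and would still need a Hopf-type argument to reach the strict conclusion.

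One substantive point deserves attention: in your final step you quietly strengthened the hypothesis from ``$O\subset\Omega$ a proper open subset'' to ``$\overline{O}\subset\Omega$''. This is not pedantry --- the theorem as literally stated is false without it. Take $\Omega$ the unit disc, $v(x,y)=x$ (harmonic, hence subharmonic, and non-constant), and $O$ the open right half-disc: this is a proper open subset, yet $\sup_O v=1=\max_{\partial\Omega}v$, so the strict inequality fails. What your argument actually establishes, and what the paper needs, is that $v<M$ everywhere in $\Omega$, hence $\max_{\overline{O}}v<\max_{\partial\Omega}v$ whenever $\overline{O}$ is a compact subset of $\Omega$; this compact-containment form is exactly what is used downstream in Corollary \ref{cor:1}, where the relevant sets are the compact supports $K_j\subset\Omega_j$ of the partition of unity. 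So your reading matches the intended meaning, but the hypothesis $\overline{O}\subset\Omega$ should be stated explicitly rather than introduced silently in the last sentence of the proof.
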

By continuity one can immediately deduce the following corollary,
which contains the key estimate for proving the convergence of the
parallel Schwarz method.
\begin{corollary}\label{cor:1}
  Let $u\in C^2(\Omega)\cap C^0(\overline \Omega)$ be a non-constant
  subharmonic function, and $K$ be a closed subset of $\Omega$. Then
  there exists a constant $\gamma \in [0,1)$ such that $\max_K
    u< \gamma \max_{\partial\Omega} u$.
\end{corollary}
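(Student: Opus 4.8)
The plan is to extract from Theorem~\ref{th:2} the strict separation $\max_K u<\max_{\partial\Omega}u$, and then to promote this additive gap to the multiplicative estimate $\max_K u<\gamma\max_{\partial\Omega}u$ by choosing the constant $\gamma\in[0,1)$ according to the sign of the boundary maximum. Throughout set $M:=\max_{\partial\Omega}u$ and $m_K:=\max_K u$.

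First I would observe that, as $\Omega$ is bounded, a closed subset $K$ of $\Omega$ is compact with $\mathrm{dist}(K,\partial\Omega)>0$, so that $u$, being continuous, attains $m_K$ at some point $x_0\in K\subset\Omega$. The weak maximum principle for subharmonic functions gives $\max_{\overline\Omega}u=M$, whence $m_K\le M$. To obtain the \emph{strict} inequality I would invoke Theorem~\ref{th:2}: choosing an open set $O$ with $K\subset O$ and $\overline O\subset\Omega$ (such $O$ exists because $\mathrm{dist}(K,\partial\Omega)>0$), the set $O$ is a proper open subset of $\Omega$ since $\partial\Omega\neq\emptyset$, and the strong maximum principle yields $m_K\le\max_{\overline O}u<M$. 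Equivalently, if $m_K=M$ then the non-constant subharmonic function $u$ would attain its global maximum at the interior point $x_0$, which Theorem~\ref{th:2} forbids; this is exactly the ``by continuity'' reduction announced before the corollary.

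With $m_K<M$ established, the remaining step is elementary arithmetic, split by the sign of $M$. If $M>0$ then $m_K/M<1$, and any $\gamma$ in the nonempty interval $(\max\{0,m_K/M\},1)$ satisfies $\gamma M>\max\{0,m_K\}\ge m_K$, as desired. If $M\le 0$ then $m_K<M\le 0$, and $\gamma=0$ already works because $\gamma M=0>m_K$. In all cases $\gamma\in[0,1)$. I expect the only point requiring care to be the transition from the open-set hypothesis of Theorem~\ref{th:2} to the closed set $K$ of the corollary — that is, interposing a relatively compact open $O$ between $K$ and $\Omega$ so that the strong maximum principle applies and the resulting gap $M-m_K$ is strictly positive; the subsequent sign bookkeeping in the selection of $\gamma$ is routine.
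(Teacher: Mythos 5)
Your proof is correct and takes essentially the same route as the paper, whose one-line proof simply observes that the closed set $K$ has positive distance from $\partial\Omega$, so that the strong maximum principle of Theorem~\ref{th:2} yields the strict gap $\max_K u<\max_{\partial\Omega}u$, with continuity then providing $\gamma<1$. Your interposition of the open set $O$ between $K$ and $\partial\Omega$ and your case analysis on the sign of $\max_{\partial\Omega}u$ merely spell out rigorously the details the paper leaves implicit.
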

\begin{proof}
  It follows by observing that since $K$ is closed, it has a
  positive distance from the boundary $\partial\Omega$
  of the open domain $\Omega$.
\end{proof}
Since the solution of the magnetotelluric approximation
  \eqref{eq:1} of Maxwell's equation has complex valued solutions, it
  is not directly possible to use the maximum principle result in
  Corollary \ref{cor:1} for proving convergence of the associated
  Schwarz method \eqref{eq:2}-\eqref{eq:3}. The key additional
  ingredient is to prove the following property on the modulus
  of solutions of the magnetotelluric approximation:
\begin{theorem}\label{th:3}
  Let $u\in C^2(\Omega)\cap C (\overline{\Omega})$ be a non-zero
  solution of the homogeneous form of equation \eqref{eq:1}.  Then $|u|^2$ is
  a non-constant subharmonic function.
\end{theorem}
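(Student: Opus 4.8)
The plan is to reduce the single complex equation to a coupled real system and then compute $\Delta|u|^2$ by hand. First I would write $u = a + ib$ with $a,b$ real-valued functions of class $C^2(\Omega)$. Substituting into the homogeneous equation $\Delta u - i\omega u = 0$ and separating real and imaginary parts yields the coupled Cauchy--Riemann-type system
\begin{equation*}
  \Delta a = -\omega b, \qquad \Delta b = \omega a.
\end{equation*}
This is the structural identity that will make the zeroth-order terms cancel.

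Next I would compute the Laplacian of $|u|^2 = a^2 + b^2$ using the elementary product rule $\Delta(a^2) = 2a\,\Delta a + 2|\nabla a|^2$, and similarly for $b^2$. This gives
\begin{equation*}
  \Delta|u|^2 = 2a\,\Delta a + 2b\,\Delta b + 2|\nabla a|^2 + 2|\nabla b|^2.
\end{equation*}
Substituting the coupled system, the first two terms become $2a(-\omega b) + 2b(\omega a) = 0$, so that $\Delta|u|^2 = 2\left(|\nabla a|^2 + |\nabla b|^2\right) \geq 0$ throughout $\Omega$. This establishes that $|u|^2$ is subharmonic in the sense of the definition given above.

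It remains to rule out the constant case. I would argue by contradiction: if $|u|^2$ were constant, then $\Delta|u|^2 \equiv 0$, which by the identity just derived forces $|\nabla a|^2 + |\nabla b|^2 \equiv 0$, hence $\nabla a \equiv \nabla b \equiv 0$ on the connected domain $\Omega$. Thus $u$ reduces to a constant $c$, and plugging this into the equation gives $-i\omega c = 0$. Since $\omega$ is assumed non-zero, this forces $c = 0$, contradicting the hypothesis that $u$ is a non-zero solution. Therefore $|u|^2$ is non-constant.

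The computation itself is entirely routine; the only step requiring a moment of care is the non-constancy, where the explicit gradient expression $\Delta|u|^2 = 2(|\nabla a|^2 + |\nabla b|^2)$ from the subharmonicity step is exactly what lets me conclude that both gradients vanish, rather than merely that the modulus is locally constrained. The fact that $\omega \neq 0$ is essential here, as it is what links vanishing of $u$'s derivatives to vanishing of $u$ itself through the equation.
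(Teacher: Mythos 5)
Your proof is correct and is essentially the paper's argument in real-variable notation: the paper computes $\Delta|u|^2 = \Delta(u\overline{u}) = 2|\nabla u|^2$ using the conjugated equation $\Delta\overline{u} + i\omega\overline{u} = 0$ to cancel the zeroth-order terms, which is exactly your cancellation $2a(-\omega b) + 2b(\omega a) = 0$ after writing $u = a+ib$, since $|\nabla a|^2 + |\nabla b|^2 = |\nabla u|^2$. Your non-constancy argument (vanishing gradients force a constant solution, which must be zero because $\omega \neq 0$) also matches the paper's, so there is nothing to add.
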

\begin{proof}	
Taking the complex conjugate of the partial differential equation
\eqref{eq:1} with $f=0$, gives a pair of equations,
\begin{equation*}\label{eq:4}
  \Delta u -i\omega u =0\quad\mbox{and}\quad
  \Delta \overline{u} + i\omega\overline u = 0 .
\end{equation*}
Hence we can compute 
\begin{eqnarray*}
  \Delta |u|^2 = \Delta (u\overline u)
  = \nabla (\overline u\nabla u + u \nabla \overline u)
  = \nabla \overline u \nabla u + \overline u \Delta u
  + \nabla u \nabla \overline u + u \Delta \overline u =&\\
  = 2 |\nabla u |^2 +i\omega |u|^2 - i\omega |u|^2
  = 2 |\nabla u |^2 \geq 0\;.&
\end{eqnarray*}
Therefore we have shown that $|u|^2$ is subharmonic.  If $|u|^2$
is constant, the same calculations show that $\nabla u \equiv 0$,
which implies that $u$ is a constant solution, hence it must be
identically equal to zero, since the equation $\Delta u -i\omega u =
0$ has no constant non-zero solutions.
\end{proof}	
We can now prove our main result, namely the convergence of
  the parallel Schwarz method for solving the magnetotelluric 
  approximation of Maxwell's equation in the infinity norm, which we
  denote by $||\cdot||_{S}$ for any function on a set $S$.
\begin{theorem}\label{th:4}
  The parallel Schwarz method \eqref{eq:2}-\eqref{eq:3}
    for the magnetotelluric approximation \eqref{eq:1} of
  Maxwell's equations is convergent and satisfies the error
    estimate
    \begin{equation}\label{eq:ConvEst}
    \max_{j=1,\ldots,J}||u-u_{j}^{n}||_{\Omega_{j}}
      \le \gamma^n\max_{j=1,\ldots,J}||u-u_{j}^{0}||_{\Omega_{j}},
    \end{equation}
    where $u$ denotes the global solution of problem \eqref{eq:1} and
    $u_j^n$ the approximations from the parallel Schwarz method
    \eqref{eq:2}-\eqref{eq:3}, and the constant $\gamma<1$ comes from
    Corollary~\ref{cor:1}.
\end{theorem}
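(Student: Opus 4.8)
The plan is to track the errors $e_j^n := u - u_j^n$ on each subdomain $\Omega_j$ and to show that the quantity $M^n := \max_{j}\|e_j^n\|_{\Omega_j}$ contracts by a fixed factor $\gamma<1$ at each iteration, which yields \eqref{eq:ConvEst} by induction. First I would subtract the subdomain problem \eqref{eq:2} from the global problem \eqref{eq:1} restricted to $\Omega_j$. By linearity the error satisfies the homogeneous equation $\Delta e_j^{n+1} - i\omega e_j^{n+1} = 0$ in $\Omega_j$, vanishes on the physical boundary $\overline\Omega_j\cap\partial\Omega$, and equals $u - u_{\text{glob}}^n$ on the interior interface $\Gamma_j$. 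The crucial algebraic observation is that, since $\sum_{k}\chi_k\equiv 1$, one has $u - u_{\text{glob}}^n = \sum_k \chi_k(u - u_k^n) = \sum_k \chi_k e_k^n$, so the interface data of $e_j^{n+1}$ is a convex combination, with the partition of unity as weights, of the previous errors $e_k^n$.

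Next I would invoke the maximum modulus principle. Since $e_j^{n+1}$ solves the homogeneous equation, Theorem~\ref{th:3} shows $|e_j^{n+1}|^2$ is subharmonic on $\Omega_j$ (the case $e_j^{n+1}\equiv 0$ being trivial), so by Theorem~\ref{th:2} its supremum over $\overline\Omega_j$ is attained on $\partial\Omega_j$. As $e_j^{n+1}$ vanishes on $\overline\Omega_j\cap\partial\Omega$, the supremum is attained on $\Gamma_j$, giving $\|e_j^{n+1}\|_{\Omega_j} = \max_{\Gamma_j}|e_j^{n+1}|$. Combining this with the partition-of-unity representation, for any $x\in\Gamma_j$ one gets $|e_j^{n+1}(x)| \le \sum_k \chi_k(x)\,|e_k^n(x)|$, where only the indices $k$ with $x$ in the support $K_k$ of $\chi_k$ contribute.

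The contraction is then produced by Corollary~\ref{cor:1}. The same maximum-principle argument shows $\|e_k^n\|_{\Omega_k} = \max_{\Gamma_k}|e_k^n|$, and applying Corollary~\ref{cor:1} to $|e_k^n|^2$ on $\Omega_k$ with the closed set $K_k$ yields a constant $\gamma_k\in[0,1)$ with $\max_{K_k}|e_k^n| \le \sqrt{\gamma_k}\,\|e_k^n\|_{\Omega_k} \le \sqrt{\gamma_k}\,M^n$. Setting $\gamma := \max_k \sqrt{\gamma_k}<1$, a maximum over the finitely many subdomains, and using $\sum_k\chi_k(x)=1$ with $\chi_k\ge 0$, I obtain $|e_j^{n+1}(x)| \le \big(\sum_k\chi_k(x)\big)\gamma M^n = \gamma M^n$ for every $x\in\Gamma_j$. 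Taking the maximum over $\Gamma_j$ and then over $j$ gives $M^{n+1}\le\gamma M^n$, and iterating establishes \eqref{eq:ConvEst}.

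The main obstacle I anticipate is justifying that a single factor $\gamma<1$ may be used at every iteration. As stated, Corollary~\ref{cor:1} produces a constant that could depend on the particular error function, and for general subharmonic functions the ratio $\max_K v/\max_{\partial\Omega}v$ can be pushed arbitrarily close to $1$ by taking nearly constant boundary data. What rescues uniformity is that each error $e_k^n$ vanishes on the physical portion of $\partial\Omega_k$: comparing $|e_k^n|^2$ with the harmonic function on $\Omega_k$ having boundary values $\max_{\Gamma_k}|e_k^n|^2$ on $\Gamma_k$ and $0$ on $\overline\Omega_k\cap\partial\Omega$, the maximum principle gives $|e_k^n|^2\le \max_{\Gamma_k}|e_k^n|^2\cdot h_k$, where $h_k$ is the harmonic measure of $\Gamma_k$, which satisfies $\max_{K_k}h_k<1$ by the strong maximum principle. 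This makes $\gamma_k=\max_{K_k}h_k$ a purely geometric quantity, governed by the overlap width through the positive distance of $K_k$ from $\partial\Omega_k$, hence independent of $n$, so the telescoping to $\gamma^n$ is legitimate. I would also treat the first step separately, since the initial error $e_k^0$ need not satisfy the homogeneous equation unless the initial iterates are already subdomain solutions.
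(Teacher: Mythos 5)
Your proposal follows essentially the same route as the paper's proof: subtract the subdomain problems from the global one so that $e_j^{n+1}$ solves the homogeneous equation, write the interface data as the partition-of-unity combination $\sum_k \chi_k e_k^n$, use Theorem~\ref{th:3} together with Theorem~\ref{th:2} to pull the supremum of the modulus to $\Gamma_j$, apply Corollary~\ref{cor:1} on the supports $K_k$ to obtain the contraction, and close by induction. The paper's displayed chain of inequalities is exactly your second and third paragraphs, except that it applies the corollary to $|e|$ rather than to $|e|^2$, which merely replaces your $\sqrt{\gamma_k}$ by $\gamma_k$ and is immaterial.

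Where you genuinely go beyond the paper is your final paragraph, and your worry is legitimate: Corollary~\ref{cor:1} as stated produces a constant that may depend on the particular subharmonic function, while the proof of Theorem~\ref{th:4} silently takes ``the maximum of the factor over all $\Omega_j$ and corresponding $K_j$'' as if it were purely geometric; for the class of \emph{all} non-constant subharmonic functions no uniform $\gamma<1$ exists, as your near-constant example shows. Your harmonic-measure comparison, $|e_k^n|^2 \le \bigl(\max_{\Gamma_k}|e_k^n|^2\bigr) h_k$ with $h_k$ the harmonic measure of $\Gamma_k$ in $\Omega_k$, is the right repair and makes $\gamma_k=\max_{K_k}h_k$ a geometric, iteration-independent quantity. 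Two caveats. First, this patch requires $\overline\Omega_k\cap\partial\Omega$ to carry positive harmonic measure; every subdomain in Figure~\ref{fig:1} touches $\partial\Omega$, but for a decomposition containing an interior subdomain (where $\Gamma_k=\partial\Omega_k$) one has $h_k\equiv 1$ and your bound degenerates. A uniform $\gamma_k<1$ still holds there, but one must use the equation rather than bare subharmonicity: a maximizing sequence of solutions with unit boundary modulus converges, by interior elliptic estimates, locally uniformly to a solution $u$ whose modulus attains an interior maximum, so $|u|^2$ is constant by the strong maximum principle, whence $\Delta|u|^2=2|\nabla u|^2=0$ forces $u$ to be a nonzero constant solution of $\Delta u - i\omega u=0$, which does not exist since $\omega\neq 0$. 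Second, your closing remark about the first step is also well taken: the estimate \eqref{eq:ConvEst} with factor $\gamma^n$ implicitly assumes the initial errors $e_j^0$ already solve the homogeneous equation; for an arbitrary smooth initial guess one only gets $M^1\le M^0$ at the first sweep and geometric decay thereafter. Neither point invalidates the theorem; both make your write-up more careful than the paper's own proof.
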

\begin{proof}
  For $j=1\cdots J$, let $K_j\subset\Omega_j$ be the support of the
  partition of unity function $\chi_j$, and let $e_j^n :=u-u_j^n$
 be the error. Then $e_j^n$ is solution of the homogeneous
  equation $\Delta e_j^n - i\omega e_j^n =0$, and hence by
    Theorem \ref{th:3} its modulus is a subharmonic function, and thus
    by Theorem \ref{th:2}, the modulus of the error error $|e^n_j|$
  satisfies the strong maximum principle. We can then estimate on
    each subdomain $\Omega_j$
\begin{eqnarray*}
  &&||e_j^{n+1}||_{\Omega_j} = ||e_j^{n+1}||_{\Gamma_j}
  = ||\sum_{j' = 1}^J \chi_{j'}e_{j'}^{n}||_{\Gamma_j}\\
  && \qquad \le \max_{j'=1,\ldots,J}||e_{j'}^{n}||_{K_{j'}}
  \leq \gamma\max_{j'=1,\ldots,J}||e_{j'}^{n}||_{\Gamma_{j'}}
  =\gamma\max_{j'=1,\ldots,J}||e_{j'}^{n}||_{\Omega_{j'}},
\end{eqnarray*}
where $\gamma \in [0,1)$ is the maximum of the factor introduced in
  Corollary \ref{cor:1} over all $\Omega_j$ and corresponding
  $K_j$. Since this holds for all $j$, we can take the maximum on the
  left and obtain
  $$
  \max_{j=1,\ldots,J}||e_{j}^{n+1}||_{\Omega_{j}}
  \le \gamma\max_{j'=1,\ldots,J}||e_{j'}^{n}||_{\Omega_{j'}},
  $$
  which proves by induction \eqref{eq:ConvEst}.
\end{proof}

\begin{remark}
  The convergence factor $\gamma<1$ is not quantified in Theorem
    \ref{th:4}, since Corollary \ref{cor:1} does not provide a method
  to estimate the constant $\gamma$ in the generality of the
    decomposition we used, but such an estimate is possible for
    specific decompositions, see for example
    \cite{CiaramellaGander2018,CiaramellaGander2018DD25}.
\end{remark}

\begin{remark}
  In \cite{Lions1989}, Lions proved the convergence of the classical
  Schwarz method for the Poisson equation with Dirichlet boundary
  conditions using a method that does not use the maximum
  principle. His remarkable proof is based on the method of orthogonal
  projections, and relies on the fact that the bilinear form
  associated with the weak formulation of the Poisson equation is an
  inner product in the solution space $H^1_0(\Omega)$. We do not see
  how this method can be extended to prove convergence of the
  classical Schwarz method applied to the magnetotelluric  
    approximation of the Maxwell equations. In our case in fact
  the bilinear form associated to the weak formulation of \eqref{eq:1}
  of the global problem is not an inner product, as it fails to be
  symmetric and positive-definite.
\end{remark}

\section{Numerical examples}\label{sec:3}

We now present two numerical experiments. The simulations
are computed on a domain $\Omega$ that consists of two squares
$\Omega_1$ and $\Omega_2$, each of unit size $1\times 1$. The
discretization for each square consists of a uniform grid of $30\times
30$ points. The overlap is along a vertical strip whose width is
specified by the number of grid points, denoted by $d$.

We first compute the error
  $e_j^n:=u-u_j^n$, as used in the proof of Theorem
  \ref{th:4}. In Figure \ref{fig:2}
\begin{figure}[t]
  \centering
  \includegraphics[scale=.20,trim=140 60 100 20,clip]{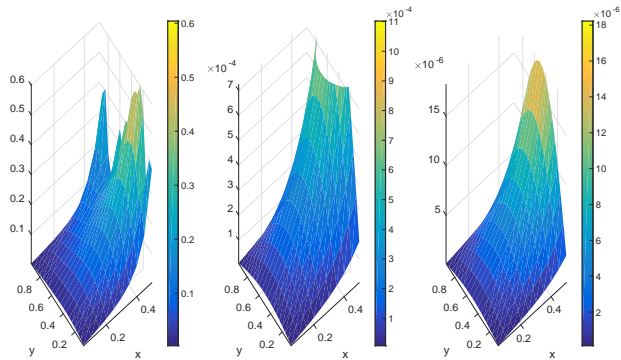}
  \caption{Modulus of the error $e_1^n:=u-u_1^n$ for $n=1,5,15$ on
      the left subdomain when using the parallel Schwarz method for
      solving $\Delta u - i\omega u = 0$. Note how the modulus
    satisfies the maximum principle.}
  \label{fig:2} 
\end{figure}
we show, from left to right, the modulus of the error on the
  left subdomain for iteration $n=1$, $n=5$ and $n=15$, for an overlap
  of $d=6$ horizontal grid points. We chose $\omega=1$, and the
  initial error was produced by generating random values uniformly
  distributed on the range $[0,1]$. Note how the modulus of the error
  clearly satisfies the maximum principle. In Figure \ref{fig:3},
\begin{figure}[t]
  \centering
  \includegraphics[scale=.55]{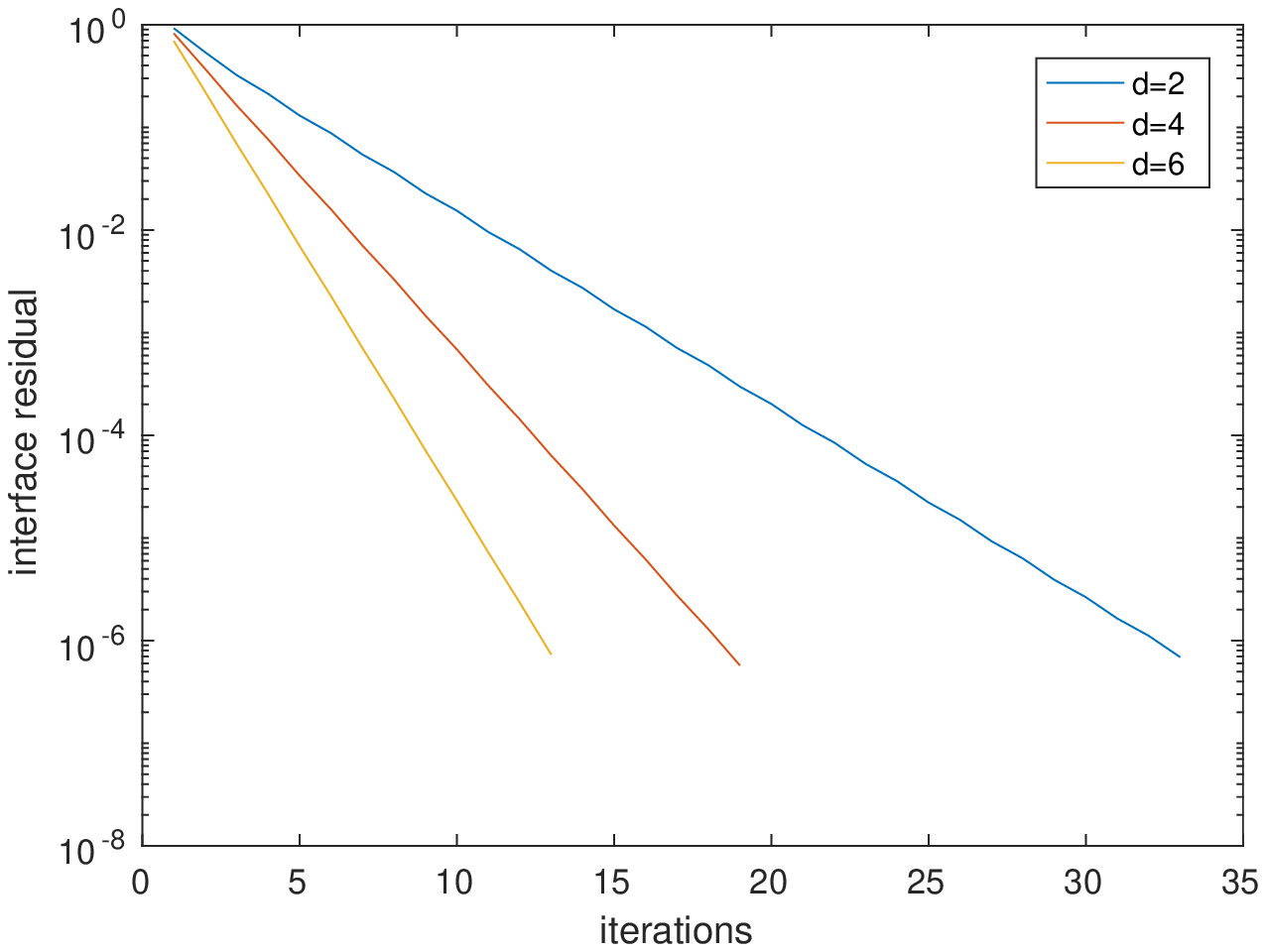}
  \caption{Decay of the interface residual in the 2-norm as a function of the iteration number
      when using the parallel Schwarz method for solving $\Delta u -
      i\omega u = 0$ using different overlap sizes ($d$ denotes the
      number of grid points in the overlap).}
  \label{fig:3} 
\end{figure}
we plot the dependence of the interface residual in the 2-norm on the iteration number, for
three different overlap sizes $d=2,4,6$. As expected, the
performance of the algorithm improves as we increase the size of the
overlap, since increasing the overlap improves the coefficient
  $\gamma$ in Corollary \ref{cor:1} which is the key quantity
  governing the convergence of the parallel Schwarz method.

\section{Conclusion} \label{sec:4}

We showed in this paper that even though the solutions of the
  magnetotelluric approximation of Maxwell's equations are complex
  valued, maximum principle arguments can be used to prove convergence
  of a parallel Schwarz method.  The main new
  ingredient is a maximum modulus principle which is satisfied by the
  solutions of the magnetotelluric approximation. In a forthcoming
  paper, we will analyze the convergence rate of the parallel Schwarz
  method via Fourier analysis, and we will also introduce more
  efficient transmission conditions of Robin (or higher-order) type at
  the interfaces between the subdomains, which leads to optimized
  Schwarz methods, see \cite{Japhet1998,Gander2006} and references
  therein.
  
{\bf Acknowledgement}
We would like to thank Dr.\ Hormoz Jandahari and Dr.\ Colin Farquharson for the insightful explanation of the physics beyond the model discussed in this manuscript. 
The research of the first author was funded, in part, by the Canada Research Chairs program, the NSERC Discovery Grant program and the IgniteR{\&}D program of the Research and Development Corporation of Newfoundland and Labrador (RDC).
\newpage
%%%%%%%%%%%%%%%%%%%%%%%% referenc.tex %%%%%%%%%%%%%%%%%%%%%%%%%%%%%%
% sample references
% %
% Use this file as a template for your own input.
%
%%%%%%%%%%%%%%%%%%%%%%%% Springer-Verlag %%%%%%%%%%%%%%%%%%%%%%%%%%
%
% BibTeX users please use
% \bibliographystyle{}
% \bibliography{}
%

\end{document}